\newtheorem{theo}{Theorem}
\newtheorem{lem}[theo]{Lemma}
\newtheorem{coro}[theo]{Corollary}
\theoremstyle{definition}
\newtheorem{remark}[theo]{Remark}
\numberwithin{equation}{section}
\numberwithin{figure}{section}
\newcommand{\su}{\subseteq}
\newcommand{\F}{\mathbb{F}}
\newcommand{\Fp}{\mathbb{F}_p}
\newcommand{\Fpn}{\mathbb{F}_p^{n}}
\newcommand{\Z}{\mathbb{Z}}
\newcommand{\s}{\mathfrak{s}}
\newcommand{\g}{\mathfrak{g}}
\newcommand{\rt}{r}
\begin{document}
\title{Erd\H{o}s-Ginzburg-Ziv constants by avoiding three-term arithmetic progressions}
\author{Jacob Fox\thanks{Department of Mathematics, Stanford University, Stanford, CA 94305. Email: {\tt jacobfox@stanford.edu}. Research supported by a Packard Fellowship and by NSF Career Award DMS-1352121.} \and Lisa Sauermann\thanks{Department of Mathematics, Stanford University, Stanford, CA 94305. Email: {\tt lsauerma@stanford.edu}. Research supported by Jacob Fox's Packard Fellowship.}}
\date{}

\maketitle

\begin{abstract}
For a finite abelian group $G$, the Erd\H{o}s-Ginzburg-Ziv constant $\mathfrak{s}(G)$ is the smallest $s$ such that every sequence of $s$ (not necessarily distinct) elements of $G$ has a zero-sum subsequence of length $\operatorname{exp}(G)$. For a prime $p$, let $r(\mathbb{F}_p^n)$ denote the size of the largest subset of $\mathbb{F}_p^n$ without a three-term arithmetic progression. Although similar methods have been used to study $\mathfrak{s}(G)$ and $r(\mathbb{F}_p^n)$, no direct connection between these quantities has previously been established. We give an upper bound for $\mathfrak{s}(G)$ in terms of $r(\mathbb{F}_p^n)$ for the prime divisors $p$ of $\operatorname{exp}(G)$.  For the special case $G=\mathbb{F}_p^n$, we prove $\mathfrak{s}(\mathbb{F}_p^n)\leq 2p\cdot r(\mathbb{F}_p^n)$. Using the upper bounds for $r(\mathbb{F}_p^n)$ of Ellenberg and Gijswijt, this result improves the previously best known upper bounds for $\mathfrak{s}(\mathbb{F}_p^n)$ given by Naslund.
\end{abstract}

\section{Introduction}
Let $G$ be a non-trivial finite abelian group, additively written. We denote the exponent of $G$ by $\exp(G)$; this is the least common multiple of the orders of all elements of $G$.

The Erd\H{o}s-Ginzburg-Ziv constant $\s(G)$ is the smallest integer $s$ such that every sequence of $s$ (not necessarily distinct) elements of $G$ has a subsequence of length $\exp(G)$ whose elements sum to zero in $G$. Furthermore, let $\g(G)$ denote the smallest integer $a$ such that every subset $A\su G$ of size $\vert A\vert\geq a$  contains $\exp(G)$ distinct elements summing to zero in $G$. It is easy to see that $\g(G)\leq \s(G)$ and $\s(G)\leq (\exp(G)-1)(\g(G)-1)+1$.

A three-term arithmetic progression is a subset of $G$ consisting of three distinct elements such that the sum of two of these elements equals twice the third element, i.e.\ a set of the form $\lbrace x,y,z\rbrace\su G$ with $x,y,z$ distinct and $x+z=2y$. For $y\in G$, a three-term arithmetic progression with middle term $y$ is a set of the form $\lbrace x,y,z\rbrace\su G$ with $x,y,z$ distinct and $x+z=2y$. For a finite abelian group $G$, let $\rt(G)$ denote the largest size of a subset of $G$ without a three-term arithmetic progression. Note that $\rt(\F_2^n)=2^n$, since there are no three-term arithmetic progressions in $\F_2^n$. Also note that in the case of $G=\F_3^n$, a three-term arithmetic progression is the same as a set of three distinct elements summing to zero, hence $\rt(\F_3^n)=\g(\F_3^n)-1$ (see also \cite{alondub2} and \cite{edeletal}).

In 1961, Erd\H{o}s, Ginzburg and Ziv \cite{egz} proved for each positive integer $k$ that any sequence of $2k-1$ integers contains a subsequence of length $k$ whose sum is divisible by $k$. The same statement is clearly not true for sequences of length $2k-2$. Thus, their result can be reformulated as $\s(\Z/k\Z)=2k-1$. The work of Erd\H{o}s, Ginzburg and Ziv \cite{egz} was the starting point for a whole field studying different zero-sum problems in various finite abelian groups; see for example the survey article by Gao and Geroldinger \cite{gaogerold}.

Note that $\s((\Z/k\Z)^n)$ has a simple geometric interpretation: it is the smallest number $s$ such that among any $s$ points in the lattice $\Z^n$ one can choose $k$ points such that their centroid is again a lattice point in $\Z^n$. Harborth \cite{harb} investigated $\s((\Z/k\Z)^n)$ in this context and was the first to study Erd\H{o}s-Ginzburg-Ziv constants for non-cyclic groups. He proved
$$(k-1)2^{n}+1\leq \s((\Z/k\Z)^n)\leq (k-1)k^n+1,$$
where the upper bound is easily obtained from the pigeonhole principle. Harborth  \cite{harb} also established $\s((\Z/2^m\Z)^n)=(2^m-1)2^{n}+1$ and in particular $\s(\F_2^n)=2^{n}+1$. For $n=2$, Reiher \cite{reiher} determined that $\s((\Z/k\Z)^2)=4k-3$ for all positive integers $k$. Alon and Dubiner \cite{alondub} proved $\s((\Z/k\Z)^n)\leq (cn\log n)^nk$ for some absolute constant $c$. Hence, for any fixed $n$, the quantity $\s((\Z/k\Z)^n)$ grows linearly with $k$. It remains an interesting question to estimate $\s((\Z/k\Z)^n)$ when $k$ is fixed and $n$ is large. Elsholtz \cite{elsholtz} obtained the lower bounds $\s((\Z/k\Z)^n)\geq 1.125^{\lfloor n/3\rfloor}(k-1)2^{n}+1$ for $k\geq 3$ odd and all $n$, and in particular $\s((\Z/k\Z)^n)\geq 2.08^n$ if $k\geq 3$ is odd and $n$ is sufficiently large.

For general finite abelian groups, Gao and Yang \cite{gaoyang} proved the upper bound $\s(G)\leq \vert G\vert+\exp(G)-1$ (see also \cite[Theorem 5.7.4]{bookgeroldhaltkoch}). 
Alon and Dubiner's result \cite{alondub} has been used to obtain upper bounds on $\s(G)$ 
when $G$ has small rank (the rank of $G$ is $\max(n_1,\dots,n_m)$, where $n_1,\dots,n_m$ are defined as in Theorem \ref{thm1} below), see \cite[Theorem 1.4]{edeletal} and \cite[Theorem 1.5]{chintamanietal}. In this paper, we will focus on the opposite case where at least one of $n_1,\dots,n_m$ is large compared to $\exp(G)$.

The case $G=\Fpn$ for a prime $p\geq 3$ has attracted particular interest. In this case, Naslund \cite{naslund} proved that $\g(\Fpn)\leq (2^p-p-2)\cdot (J(p)p)^n$ and  $\s(\Fpn)\leq (p-1)2^p\cdot (J(p)p)^n$, where $0.8414\leq J(p)\leq 0.9184$. To prove these bounds, Naslund introduced a variant of Tao's slice rank method \cite{tao}. Tao developed this method as an alternative formulation of the proof of $\rt(\Fpn)\leq (J(p)p)^n$ by Ellenberg and Gijswijt \cite{ellengijs}, which in turn used the new polynomial method introduced by Croot, Lev and Pach \cite{crootlevpach} to prove $\rt((\Z/4\Z)^n)\leq 3.62^n$. Note that the constant $J(p)p$ in Naslund's bounds for $\g(\Fpn)$ and $\s(\Fpn)$ is the same as in the bound $\rt(\Fpn)\leq (J(p)p)^n$ by Ellenberg and Gijswijt \cite{ellengijs}, see also \cite{blasiaketal}.

While similar methods have been applied to prove upper bounds for the Erd\H{o}s-Ginzburg-Ziv constant and upper bounds for sets without arithmetic progressions,  no direct connection between the two problems has previously been established (apart from the case $G=\F_3^n$ mentioned above). In this note, we derive upper bounds for $\s(G)$ for all finite abelian groups $G$ in terms of $\rt(\Fpn)$ for the prime divisors $p$ of $\exp(G)$. It is also possible to prove an upper bound of the form $\s(G)\leq O(\exp(G)\rt(G))$. However, $\exp(G)\rt(G)$ is usually much larger than our upper bound in Theorem \ref{thm1}.

\begin{theo}\label{thm1} Let $G$ be a non-trivial finite abelian group. Let $p_1,\dots,p_m$ be the distinct prime factors of $\exp(G)$. When writing $G$ as a product of cyclic groups of prime power order, all the occurring prime powers are powers of $p_1,\dots,p_m$. For $i=1,\dots,m$, let $n_i$ be the number of cyclic factors of $G$ whose order is a power of $p_i$. Then we have
$$\s(G)< 3\exp(G)\cdot (\rt(\F_{p_1}^{n_1})+\dots+\rt(\F_{p_m}^{n_m})).$$
\end{theo}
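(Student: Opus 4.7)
The plan is to reduce Theorem~\ref{thm1} to the special case $G=\Fpn$ via two successive reductions: first from $G$ to its $p$-primary Sylow components, and then from each $p$-primary group to the elementary abelian quotient $\Fpn$. The combinatorial engine in both reductions is an \emph{extract-and-iterate} step: given a long sequence in $G$, I use a bound on $\s$ for a smaller (quotient) group to extract a zero-sum subsequence whose ``residue'' lies in a proper subgroup, and then iterate inside that subgroup.

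The main technical step, and the hard part, is to prove the special case $\s(\Fpn)\leq 2p\cdot\rt(\Fpn)$ promised in the abstract. Consider a sequence of $s\geq 2p\cdot\rt(\Fpn)$ elements of $\Fpn$ with no zero-sum subsequence of length $p$; let $m_v$ denote the multiplicity of $v$, put $V=\{v:m_v\geq 1\}$, and note $m_v\leq p-1$ (else $p$ copies of $v$ already give a zero-sum, since $pv=0$). The key observation is this: for any three-term arithmetic progression $\{a,b,c\}\su V$ with $a+c=2b$ and any $k\in\{1,\dots,(p-1)/2\}$, the multiset consisting of $k$ copies of $a$, $p-2k$ copies of $b$, and $k$ copies of $c$ has length $p$ and sum $pb=0$, so the no-zero-sum hypothesis forces $\min(m_a,m_c)<k$ or $m_b<p-2k$. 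Writing $V^{(j)}=\{v:m_v\geq j\}$, this translates to: for each such $k$, $V$ contains no 3-AP with endpoints in $V^{(k)}$ and middle in $V^{(p-2k)}$. Distinguishing whether $k\leq p/3$ or $k\geq p/3$, one of $V^{(p-2k)}$ or $V^{(k)}$ sits inside the other, and in both cases the set $V^{(\max(k,p-2k))}$ turns out to be 3-AP-free, hence of size $\leq\rt(\Fpn)$. Letting $k$ range over $\{1,\dots,(p-1)/2\}$ gives $|V^{(j)}|\leq\rt(\Fpn)$ for every $j\geq\lceil p/3\rceil$, and a careful accounting of the multiplicity levels --- using the constraints for all $k$ in concert --- then aims to bound $s=\sum_{j=1}^{p-1}|V^{(j)}|$ by $2p\cdot\rt(\Fpn)-1$, yielding the desired contradiction. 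The main difficulty is controlling the small-multiplicity part of $V$, where the naive bounds on $|V^{(j)}|$ do not yet directly apply.

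With the $\Fpn$ bound in hand, the remaining steps are structural. For an abelian $p$-group $H$ with $n$ cyclic factors and exponent $p^e$, I embed $H\leq(\Z/p^e\Z)^n$ and project to $(\Z/p^e\Z)^n/p(\Z/p^e\Z)^n\cong\Fpn$. In a long sequence in $(\Z/p^e\Z)^n$, I iteratively extract length-$p$ zero-sums from the $\Fpn$-projection (each one removing $p$ elements); the sums of the extracted batches lie in $p(\Z/p^e\Z)^n\cong(\Z/p^{e-1}\Z)^n$, and induction on $e$ produces a length-$p^{e-1}$ zero-sum among those batch-sums that reassembles to a length-$p^e$ zero-sum in the original sequence. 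The resulting recursion $\s((\Z/p^e\Z)^n)\leq\s(\Fpn)+p\bigl(\s((\Z/p^{e-1}\Z)^n)-1\bigr)$ telescopes to $\s((\Z/p^e\Z)^n)\leq\frac{2p}{p-1}(p^e-1)\rt(\Fpn)<3\exp(H)\rt(\Fpn)$ for $p\geq 3$, with the case $p=2$ handled via Harborth's explicit formula. Finally, to combine the Sylow components, I write $G=G_{p_1}\oplus G'$ with $G'=G_{p_2}\oplus\cdots\oplus G_{p_m}$ and induct on $m$: the same extract-and-iterate idea (now with $G_{p_1}$ playing the role of $\Fpn$ and $G'$ that of the smaller residual group) gives $\s(G)\leq\s(G_{p_1})+(\s(G')-1)\exp(G_{p_1})$, and plugging in the $p$-group bound and the inductive hypothesis --- using $\exp(G_{p_1})\leq\exp(G)$ --- yields $\s(G)<3\exp(G)\sum_{i=1}^m\rt(\F_{p_i}^{n_i})$, completing the proof.
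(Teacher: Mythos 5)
Your structural reductions (Sylow decomposition, $p$-group to $\Fpn$, the recursion $\s(G)\leq\exp(G/H)(\s(H)-1)+\s(G/H)$, and the final plug-in) are correct and essentially identical to the paper's, which uses Proposition 3.1 of Chi--Ding--Gao--Geroldinger--Schmid (Lemma~\ref{lemmaprod}) and the telescoping in Lemmas~\ref{lemma-pgroup} and \ref{lemma-product-pgroups}. The real divergence is in the core step $\s(\Fpn)\leq 2p\cdot\rt(\Fpn)$, and there your proposal has a genuine gap.

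Your 3-AP/multiplicity argument is correct as far as it goes: for each $k\in\{1,\dots,(p-1)/2\}$ the no-zero-sum hypothesis forbids 3-APs with endpoints in $V^{(k)}$ and middle in $V^{(p-2k)}$, and by nesting this shows $V^{(j)}$ is 3-AP-free, hence $|V^{(j)}|\leq\rt(\Fpn)$, for every $j\geq\lceil p/3\rceil$. But for $j<\lceil p/3\rceil$ these constraints give literally \emph{nothing}. The cleanest way to see the problem: if every $m_v=1$, then the only applicable constraint is $k=1$, which merely asserts $m_b<p-2$ for middles $b$ of 3-APs in $V$; for $p\geq 5$ this is the tautology $1<p-2$. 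So in the unit-multiplicity regime the sequence is just a set with no $p$ distinct elements summing to zero --- precisely a $\g(\Fpn)$-type condition --- and the zero-sum multisets supported on a single 3-AP carry no information about it. There is no ``accounting of multiplicity levels'' that can close this gap using only 3-AP-supported zero-sums, because those zero-sums are not the relevant obstruction here. The paper instead first reduces $\s(\Fpn)\leq\g(\Fp^{n+1})$ by appending a coordinate to make all terms distinct, and then bounds $\g$ via Lemma~\ref{lem1} (a set with no $p$ distinct elements summing to zero has each point as the middle of at most $\frac{p-3}{2}$ 3-APs --- this uses the more complicated ``star'' zero-sum pattern $x,y_1,z_1,\dots,y_{(p-1)/2},z_{(p-1)/2}$ with $y_i+z_i=2x$, not a single 3-AP) together with a random affine-hyperplane sampling argument. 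Some version of that richer class of zero-sum witnesses and the averaging over hyperplanes appears to be unavoidable; your constraints ``for all $k$ in concert'' cannot substitute for it.
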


For the case $G=(\Z/k\Z)^n$ we obtain the following corollary (note that $\Z/k\Z$ has precisely one cyclic factor of prime power order for each distinct prime dividing $k$, hence $(\Z/k\Z)^n$ has precisely $n$ cyclic factors for each distinct prime dividing $k$).

\begin{coro}\label{coro2} Let $k\geq 2$ be an integer and let $p_1,\dots,p_m$ be its distinct prime factors. Then we have
$$\s((\Z/k\Z)^n)< 3k(\rt(\F_{p_1}^{n})+\dots+\rt(\F_{p_m}^{n}))$$
for every positive integer $n$.
\end{coro}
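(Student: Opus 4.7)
The plan is to deduce the corollary as an immediate specialisation of Theorem \ref{thm1} applied to $G = (\Z/k\Z)^n$. No new idea is needed: the only task is to identify the parameters $m$ and $n_1, \dots, n_m$ appearing in that theorem with quantities associated to $k$ and $n$.

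First I would compute $\exp((\Z/k\Z)^n) = k$, so the distinct prime factors of $\exp(G)$ in the notation of Theorem \ref{thm1} are exactly the distinct prime factors $p_1, \dots, p_m$ of $k$. Next, using the Chinese Remainder Theorem I would write
$$\Z/k\Z \;\cong\; (\Z/p_1^{a_1}\Z) \times \cdots \times (\Z/p_m^{a_m}\Z),$$
where $p_i^{a_i}$ is the maximal power of $p_i$ dividing $k$, and then take the $n$-th power of this isomorphism to obtain a decomposition of $(\Z/k\Z)^n$ as a product of cyclic groups of prime power order with exactly $n$ cyclic factors whose order is a power of $p_i$ for each $i$. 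By the uniqueness of the primary decomposition into cyclic factors of prime power order, this forces $n_i = n$ for every $i$ in the notation of Theorem \ref{thm1}.

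Substituting $\exp(G) = k$ and $n_1 = \cdots = n_m = n$ into the conclusion of Theorem \ref{thm1} then yields
$$\s((\Z/k\Z)^n) < 3k\,(\rt(\F_{p_1}^n) + \cdots + \rt(\F_{p_m}^n)),$$
which is exactly the statement of the corollary. There is no substantive obstacle here: the result is a direct specialisation of Theorem \ref{thm1}, and the only care needed is the standard decomposition of $(\Z/k\Z)^n$ into cyclic groups of prime power order, which is precisely the content of the parenthetical remark preceding the corollary.
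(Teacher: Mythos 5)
Your proposal is correct and matches the paper's reasoning: the paper also obtains the corollary as an immediate specialisation of Theorem~\ref{thm1}, noting that $\Z/k\Z$ has exactly one cyclic factor of prime power order per prime divisor of $k$, hence $(\Z/k\Z)^n$ has $n_i = n$ for each $i$ and $\exp(G)=k$. Nothing further is needed.
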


Recall that $\rt(\F_2^n)=2^n$. For primes $p\geq 3$ it is known from \cite{ellengijs} and \cite{blasiaketal} that $\rt(\Fpn)\leq (J(p)p)^n$, with $0.8414\leq J(p)\leq 0.9184$ and with $J(p)$ being a decreasing function that tends to $0.8414...$ as $p\to\infty$ (see \cite{blasiaketal} for more details and for the precise definition of the function $J(p)$). As a lower bound, we have $\rt(\Fp)\geq p^{1-o(1)}$ by Behrend's construction \cite{behrend} and $\rt(\Fpn)\geq p^{(1-o(1))n}$ by taking a product with Behrend's construction in each coordinate (here $o(1)\to 0$ as $p\to\infty$ independently of $n$). Furthermore, Alon, Shpilka and Umans \cite{alonshpilkaumans}, relying on a construction of Salem and Spencer \cite{salemspencer}, proved $\rt(\Fpn)\geq (p/2)^{(1-o(1))n}$, where $o(1)\to 0$ as $n\to\infty$ with $p$ fixed. A variant of Behrend's construction due to Alon gives an improvement of the $o(1)$-term (see \cite[Lemma 17]{foxpham}). Note that in light of $\rt(\Fpn)\geq (p/2)^{(1-o(1))n}$, for large $n$ and odd $k\geq 3$ there is still a big gap between Elsholtz' lower bound $\s((\Z/k\Z)^n)\geq 2.08^n$ and the upper bound for $s((\Z/k\Z)^n)$ in Corollary \ref{coro2}.

The bounds in Theorem \ref{thm1} and Corollary \ref{coro2} look clean and simple, but they are not the optimal results that can be obtained from our arguments (see Remark \ref{complicatedremark} and the second inequality in Lemma \ref{lemma-product-pgroups} where certain terms are just ignored). However, the improvements when optimizing the estimates in our proof are not very significant as long as $\exp(G)$ is small compared to at least one of $n_1,\dots,n_m$.

In Section \ref{sect2}, we will first prove the following upper bounds for $\g(\Fpn)$ and $\s(\Fpn)$ using the probabilistic method. In Section \ref{sect3} we will then deduce Theorem \ref{thm1} from Theorem \ref{thm4}.

\begin{theo}\label{thm3} Let $p\geq 3$ be a prime and $n\geq 2$ be an integer. Then $\g(\Fpn)\leq 2p\cdot \rt(\Fp^{n-1})$.
\end{theo}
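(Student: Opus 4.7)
The plan is to fibre $A$ over $\Fp^{n-1}$ and combine a three-term AP in the projection with the Cauchy--Davenport theorem in the remaining coordinate. Write $\Fpn = \Fp^{n-1} \oplus \Fp$, set $A_y = \{x \in \Fp : (y,x) \in A\}$ and $T = \{y \in \Fp^{n-1} : A_y \neq \emptyset\}$. If some fibre is full (some $|A_y|=p$), then $(y,0),(y,1),\dots,(y,p-1)$ are $p$ distinct elements of $A$ summing to $(py, \tfrac{p(p-1)}{2}) = (0,0)$ (since $p$ is odd), and we are done. Otherwise $|A_y|\leq p-1$ for every $y$, giving $|T|\geq |A|/(p-1) > 2\,\rt(\Fp^{n-1})$, and in particular $|T| > \rt(\Fp^{n-1})$, so $T$ contains a 3-term AP $\{y_1,y_2,y_3\}$ with $y_1+y_3=2y_2$.

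Given such a 3-AP, the plan is to pick, for some $m \in \{1,\dots,(p-1)/2\}$, a total of $p$ distinct elements of $A$: namely $m$ from $A_{y_1}$, $p-2m$ from $A_{y_2}$, and $m$ from $A_{y_3}$. The projection of the resulting sum onto $\Fp^{n-1}$ is $m y_1 + (p-2m) y_2 + m y_3 = p y_2 = 0$ by the 3-AP relation, and the $x$-coordinate of the sum then ranges over the iterated sumset $\Sigma_m(A_{y_1}) + \Sigma_{p-2m}(A_{y_2}) + \Sigma_m(A_{y_3})$ in $\Fp$, where $\Sigma_k(S)$ denotes the set of sums of $k$-element subsets of $S$. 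By the Dias da Silva--Hamidoune theorem combined with Cauchy--Davenport, this sumset equals all of $\Fp$ (and hence contains $0$) provided the fibre sizes $|A_{y_i}|$ sufficiently exceed the chosen multiplicities, yielding $p$ distinct elements of $A$ summing to $(0,0)$.

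The probabilistic/averaging step is used to guarantee a 3-AP in $T$ with sufficiently thick fibres. Stratifying by fibre size, $T^{(k)} = \{y : |A_y| \geq k\}$, the hypothesis $|A|\geq 2p\,\rt(\Fp^{n-1})$ gives $\sum_{k=1}^{p-1} |T^{(k)}| = |A| \geq 2p\,\rt(\Fp^{n-1})$, so some level $T^{(k^*)}$ satisfies $|T^{(k^*)}| > \rt(\Fp^{n-1})$, producing a 3-AP in $T^{(k^*)}$ with all three fibres of size at least $k^*$; choosing $m \approx p/3$ balances the multiplicity pattern against the fibre sizes so that the Cauchy--Davenport/DSH bound is triggered and the sumset covers $0$.

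The main obstacle I anticipate is the boundary regime: when the thick stratum's level $k^*$ is small (so most of the mass of $A$ sits in thin fibres), the multiplicities $m_i$ one can use are cramped against the fibre sizes, and the Dias da Silva--Hamidoune lower bound on the subset-sum sumset may fall short of $\Fp$. Overcoming this will presumably require a finer averaging that identifies some stratum where fibres are strictly larger than the multiplicities forced by the 3-AP relation, or a separate structural argument in the extreme case where $T$ itself is very large (in particular $|T| \gg \rt(\Fp^{n-1})$), which should yield either extra 3-APs to combine or a direct combinatorial construction of the required zero-sum $p$-tuple.
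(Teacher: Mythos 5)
There is a genuine gap, and it is exactly the ``boundary regime'' you flag; the stratification you sketch does not close it. Consider the extremal bad case where every fibre $A_y$ has size exactly $1$, so $A$ is the graph of a function on $T$ with $|T|=|A|\geq 2p\,\rt(\Fp^{n-1})$. A $3$-AP $\{y_1,y_2,y_3\}$ in $T$ is then useless: the balance condition $m_1y_1+m_2y_2+m_3y_3=0$ in $\Fp^{n-1}$ (with $m_1+m_2+m_3=p$) forces $m_1=m_3=m$ and $m_2=p-2m$ whenever $y_1,y_2$ are linearly independent, and the fibre-size constraints $m\leq |A_{y_1}|=1$ and $p-2m\leq |A_{y_2}|=1$ have no admissible solution for $p\geq 5$, while for $p=3$ they force a single, rigid choice with no Cauchy--Davenport freedom. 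Your stratification $\sum_{k=1}^{p-1}|T^{(k)}|=|A|$ does not rescue this: the only level guaranteed to exceed $\rt(\Fp^{n-1})$ is $T^{(1)}=T$ itself, and in the all-singleton case $T^{(k)}=\emptyset$ for every $k\geq 2$. So the ``separate structural argument'' you defer to in the extreme case is in fact an instance of the original problem, not a degenerate corner.

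The paper's route is different and sidesteps fibre thickness entirely. The key step (Lemma \ref{lem1}) shows that if $A$ has no $p$ distinct elements summing to zero, then each $x\in A$ is the middle term of at most $\frac{p-3}{2}$ three-term APs inside $A$: pairing $\frac{p-1}{2}$ such progressions with common midpoint $x$ and including $x$ itself would yield $p$ distinct elements of $A$ summing to $\frac{p-1}{2}\cdot 2x + x = px = 0$. Hence $A$ contains at most $\frac{p-3}{2}|A|$ three-term APs in total. Sampling a uniformly random hyperplane $V\subseteq\Fpn$ gives $\mathbb{E}[|A\cap V|]=\frac{1}{p}|A|$ while the expected number of $3$-APs inside $A\cap V$ is strictly below $\frac{1}{2p}|A|$ (each $3$-AP survives with probability less than $1/p^2$). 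Deleting one point per surviving $3$-AP from a good slice $A\cap V$ leaves a $3$-AP-free subset of $V$ of size greater than $\frac{1}{2p}|A|$, which is at most $\rt(\Fp^{n-1})$, giving $|A|<2p\,\rt(\Fp^{n-1})$. This converts ``no zero-sum $p$-subset'' directly into ``few $3$-APs'' and then into ``large $3$-AP-free set'' in a codimension-$1$ slice, with no hypothesis on how $A$ distributes over fibres.
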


\begin{theo}\label{thm4} Let $p\geq 3$ be a prime and $n\geq 1$ be an integer. Then $\s(\Fpn)\leq 2p\cdot \rt(\Fpn)$.
\end{theo}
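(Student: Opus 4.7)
The plan is to argue by contradiction. Suppose $g_1, \dots, g_s \in \mathbb{F}_p^n$ is a sequence of length $s = 2p \cdot r(\mathbb{F}_p^n)$ with no $p$-term zero-sum subsequence. First, if any value $v \in \mathbb{F}_p^n$ appears $p$ or more times in the sequence, then $p$ copies of $v$ sum to $pv = 0$, a contradiction. Hence every multiplicity $m_g$ is at most $p - 1$, so the set $S$ of distinct values in the sequence satisfies $|S| \geq s/(p - 1) > 2\, r(\mathbb{F}_p^n)$; in particular $S$ contains a three-term arithmetic progression.

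The combinatorial bridge to zero-sums is as follows. For any 3-AP $\{u, v, w\} \subseteq \mathbb{F}_p^n$ (with $u + w = 2v$ and all three distinct) and any integer $1 \leq k \leq (p-1)/2$, the identity $ku + (p - 2k)v + kw = pv = 0$ yields a zero-sum subsequence of length $p$ whenever $u$ and $w$ each appear at least $k$ times and $v$ appears at least $p - 2k$ times in the sequence. Our standing assumption thus forces, for every 3-AP $\{u, v, w\} \subseteq S$ with middle $v$, the inequality $2 \min(m_u, m_w) + m_v \leq p - 1$.

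The plan is to convert this local constraint into a global length bound by using the 3-AP-free bound $r(\mathbb{F}_p^n)$. Introducing the level sets $T_j = \{g \in \mathbb{F}_p^n : m_g \geq j\}$, one has $s = \sum_{j=1}^{p-1} |T_j|$, and the local constraint translates to: for each $1 \leq k \leq (p-1)/2$, no 3-AP of $\mathbb{F}_p^n$ has both endpoints in $T_k$ and middle in $T_{p-2k}$. Setting $k = \lceil (p-1)/2 \rceil$ immediately yields that $T_{\lceil (p-1)/2 \rceil}$ is 3-AP-free, so $|T_j| \leq r(\mathbb{F}_p^n)$ for every $j \geq \lceil (p-1)/2 \rceil$. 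A probabilistic selection of indices—for instance a random thinning of the multiset, or a uniformly random 3-AP with random middle weighted by multiplicities—should then yield bounds on the remaining low level sets $T_j$ by exhibiting a 3-AP-free set in $\mathbb{F}_p^n$ of size proportional to their contribution.

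The main obstacle is precisely this control of the low-multiplicity level sets $T_j$ for $j < (p-1)/2$, for which the 3-AP-free bound does not directly apply. Here the constraint intertwines $T_j$ with $T_{p-2j}$, and a careful random construction is needed to produce a single 3-AP-free subset whose size is a positively weighted combination of the $|T_j|$'s, all weights summing to the desired constant. Balancing these contributions so that the final coefficient comes out to exactly $2p$, rather than a cruder $p^2$ or $p(p-1)$, appears to be the delicate part, and I expect it to hinge on a well-chosen probabilistic averaging over random thinnings that equalizes the role played by each level simultaneously.
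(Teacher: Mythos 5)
There is a genuine gap, and it is structural, not cosmetic. Your proposal extracts information from the hypothesis ``no zero-sum subsequence of length $p$'' \emph{only} through zero-sums supported on a single three-term arithmetic progression, via the identity $ku+(p-2k)v+kw=pv=0$. Such constraints become vacuous exactly in the regime that the theorem must still handle. Consider the case where every multiplicity equals $1$, so the sequence is simply a subset $S\subseteq\Fp^n$ with no $p$ distinct elements summing to zero: then $T_1=S$ and $T_j=\emptyset$ for $j\geq 2$, and for $p\geq 5$ every one of your constraints (``no 3-AP with endpoints in $T_k$ and middle in $T_{p-2k}$'') involves an empty level set, hence says nothing. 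Yet the theorem, specialized to this case, still asserts $|S|<2p\cdot\rt(\Fp^n)$, i.e.\ essentially a bound on $\g(\Fp^n)$. The ``careful random construction'' you gesture at in the last paragraph cannot be built from the local 3-AP identity alone; you would need a different and strictly stronger consequence of the no-zero-sum hypothesis. (As a minor side remark, the inequality $2\min(m_u,m_w)+m_v\leq p-1$ as stated is not quite right when $\min(m_u,m_w)>(p-1)/2$; it should be $2\min\bigl(\min(m_u,m_w),(p-1)/2\bigr)+m_v\leq p-1$, though your reformulation in terms of the sets $T_j$ is the correct one.)

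The paper closes precisely the gap you run into, with two ideas that do not appear in your proposal. First, it proves a stronger local lemma: if $A\subseteq\Fp^n$ contains no $p$ distinct elements summing to zero, then every $x\in A$ is the middle term of at most $(p-3)/2$ three-term progressions in $A$. This uses not a single 3-AP but the observation that $(p-1)/2$ progressions sharing the middle $x$ together furnish $p$ distinct elements summing to zero---a combination across many 3-APs, which is exactly the extra leverage your single-AP constraint lacks. Second, rather than juggling multiplicities directly, the paper reduces the sequence problem to a set problem by appending one extra coordinate to $\Fp^n$: since each vector appears at most $p-1<p$ times, the lifted vectors in $\Fp^{n+1}$ are all distinct, giving $\s(\Fp^n)\leq\g(\Fp^{n+1})$. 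Then a random hyperplane argument (pick a random affine hyperplane $V$, compare $\mathbb{E}[|A\cap V|]$ to the expected number of 3-APs in $A\cap V$, delete one point per surviving AP) yields $\g(\Fp^{n+1})\leq 2p\cdot\rt(\Fp^n)$, which in particular lands on $\rt(\Fp^n)$ rather than $\rt(\Fp^{n+1})$. Your proposal contains neither the many-AP counting lemma, nor the dimension lift, nor the hyperplane sampling, and without at least the first of these the level-set program cannot be completed.
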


For $p\geq 3$ prime, using $\rt(\Fpn)\leq (J(p)p)^n$, we obtain
$$\g(\Fpn)\leq 2p\cdot \rt(\Fp^{n-1})\leq 2p\cdot (J(p)p)^{n-1}<3(J(p)p)^{n}$$
and
$$\s(\Fpn)\leq 2p\cdot \rt(\Fpn)\leq 2p\cdot (J(p)p)^{n},$$
which slightly improves the previously best known bounds for $\g(\Fpn)$ and $\s(\Fpn)$ from \cite{naslund}.

To obtain an upper bound for $\g(\Fpn)$ in terms of $\rt(\Fpn)$, note that a product construction shows
$$\rt(\Fpn)\geq \rt(\Fp^{n-1})\cdot \rt(\Fp)\geq 2\rt(\Fp^{n-1})p^{1-o(1)}.$$
Hence, Theorem \ref{thm3} implies $\g(\Fpn)\leq  p^{o(1)}\rt(\Fp^{n})$, where $o(1)\to 0$ as $p\to\infty$ independently of $n$.

\section{Proof of Theorems \ref{thm3} and \ref{thm4}}\label{sect2}

\begin{lem}\label{lem1} Let $p\geq 3$ be a prime and $n\geq 1$. If $A\su \Fpn$ does not contain $p$ distinct elements summing to zero, then for every $x\in A$ the set $A$ contains at most $\frac{p-3}{2}$ different three-term arithmetic progressions with middle term $x$.
\end{lem}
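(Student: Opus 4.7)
The plan is to observe that three-term arithmetic progressions with middle term $x$ correspond bijectively to unordered pairs $\{y,z\}\su A\setminus\{x\}$ with $y\neq z$ and $y+z=2x$. Crucially, any two such pairs are automatically \emph{disjoint}, since once $y$ is chosen, the partner $z=2x-y$ is uniquely determined. Hence if $A$ contains $k$ distinct three-term APs with middle term $x$, we can find $2k$ distinct elements of $A\setminus\{x\}$ that split into $k$ pairs each summing to $2x$.

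The key observation is then a simple arithmetic identity in $\Fpn$: any $\frac{p-1}{2}$ such pairs together with $x$ itself form a set of $p$ distinct elements of $A$ whose sum equals
$$\tfrac{p-1}{2}\cdot (2x) + x = (p-1)x + x = px = 0.$$
So the proof proceeds by contradiction: assuming that $A$ contains at least $\frac{p-1}{2}$ three-term APs with middle term $x$, select any $\frac{p-1}{2}$ of them, collect the $p-1$ elements appearing in the corresponding pairs, and adjoin $x$. This yields $p$ distinct elements of $A$ summing to zero, contradicting the hypothesis on $A$. Therefore the number of such APs is at most $\frac{p-1}{2}-1=\frac{p-3}{2}$, as required.

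There is essentially no obstacle here: the argument is a one-line counting observation combined with the identity $p\cdot x=0$ in $\Fpn$. The only minor point to verify is that $\frac{p-3}{2}$ is a nonnegative integer (which holds since $p\geq 3$ is odd) and that the $p-1$ elements produced from the chosen pairs are genuinely distinct from one another and from $x$, both of which follow immediately from the uniqueness of the partner $z=2x-y$ and the fact that $y\neq x$ for any element $y$ participating in an AP with middle term $x$.
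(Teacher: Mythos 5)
Your proof is correct and matches the paper's argument essentially verbatim: both proceed by contradiction, taking $\frac{p-1}{2}$ three-term APs with middle term $x$, observing that the resulting $\frac{p-1}{2}$ pairs (each summing to $2x$) are pairwise disjoint and miss $x$, and noting that the $p$ elements so obtained sum to $\frac{p-1}{2}\cdot 2x + x = px = 0$. Your added explanation of why distinct pairs are disjoint (the partner of $y$ is forced to be $2x-y$) is exactly the "not hard to see" step the paper leaves implicit.
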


\begin{proof}
Suppose that for some $x\in A$ the set $A$ contains $\frac{p-1}{2}$ different three-term arithmetic progressions with middle term $x$. Each of them consists of $x$ and two more elements of $A$ whose sum equals $2x$. So we obtain $\frac{p-1}{2}$ pairs of elements of $A$, each pair with sum $2x$. It is not hard to see that the $p-1$ elements of $A$ involved in these $\frac{p-1}{2}$ pairs are all distinct and distinct from $x$. So taking these $p-1$ elements together with $x$ itself, we obtain $p$ distinct elements of $A$ with sum 
$\frac{p-1}{2}\cdot 2x+x=p\cdot x=0$.
This is a contradiction to the assumption on $A$.
\end{proof}

\begin{remark}\label{rem21}By definition, $\rt(\Fp^{n-1})$ is the largest size of a subset of $\Fp^{n-1}$ without a three-term arithmetic progression. Let $V$ be an affine subspace of dimension $n-1$ in $\Fpn$, i.e.\ a hyperplane in $\Fpn$. We can consider a translation moving $V$ to the origin (so that it becomes a linear subspace of dimension $n-1$) and then an isomorphism to $\Fp^{n-1}$. This gives a bijection between $V$ and $\Fp^{n-1}$ which preserves three-term arithmetic progressions. Hence the largest size of a subset of $V$ without a three-term arithmetic progression is also equal to $\rt(\Fp^{n-1})$.
\end{remark}

We will now prove Theorem \ref{thm3}. Note that $\exp(\Fpn)=p$.

\begin{proof}[Proof of Theorem \ref{thm3}]
Let $A\su \Fpn$ be a subset that does not contain $p$ distinct elements summing to zero. We need to show that $\vert A\vert < 2p\cdot \rt(\Fp^{n-1})$.

By Lemma \ref{lem1} we know that for every $x\in A$ the set $A$ contains at most $\frac{p-3}{2}$ different three-term arithmetic progressions with middle term $x$. Hence the total number of three-term arithmetic progressions contained in the set $A$ is at most $\frac{p-3}{2}\vert A\vert$.

Pick an affine subspace $V$ of dimension $n-1$ in $\Fpn$ uniformly at random. Let $X_1=\vert A\cap V\vert$ and let $X_2$ be the number of three-term arithmetic progressions that are contained in $A\cap V$. Since each point of $A$ is contained in $V$ with probability $\frac{1}{p}$, we have $\mathbb{E}[X_1]=\frac{1}{p}\vert A\vert$.

For any three-term arithmetic progression, the probability that its first element is contained in $V$ is equal to $\frac{1}{p}$. Conditioned on this, the probability that its second element is also contained in $V$ is $\frac{p^{n-1}-1}{p^n-1}< \frac{1}{p}$ (and note that then the third element will be contained in $V$ as well). Hence for any three-term arithmetic progression contained in $A$, the probability that it is contained in $A\cap V$ is less than $\frac{1}{p^{2}}$. Since $A$ contains at most $\frac{p-3}{2}\vert A\vert$ three-term arithmetic progressions, we obtain
$$\mathbb{E}[X_2]<\frac{1}{p^{2}}\cdot \frac{p-3}{2}\vert A\vert<\frac{1}{2p}\vert A\vert.$$
Thus, $\mathbb{E}[X_1-X_2]>\frac{1}{2p}\vert A\vert$. So we can choose an affine subspace $V$ of dimension $n-1$ in $\Fpn$ such that $X_1-X_2>\frac{1}{2p}\vert A\vert$. Let $B$ be a set obtained from $A\cap V$ after deleting one element from each three-term arithmetic progression contained in $A\cap V$. Then $\vert B\vert\geq X_1-X_2>\frac{1}{2p}\vert A\vert$. By construction, $B$ is a subset of $V$ that does not contain any three-term arithmetic progression. By Remark \ref{rem21}, we can conclude that $\vert B\vert\leq \rt(\Fp^{n-1})$. Thus, $\frac{1}{2p}\vert A\vert<\vert B\vert\leq \rt(\Fp^{n-1})$ and therefore $\vert A\vert < 2p\cdot \rt(\Fp^{n-1})$.
\end{proof}

Our proof of Theorem \ref{thm3} is somewhat similar to the first half of the proof of Proposition 2.5 in Alon's paper \cite{alon}. There, he also considered points which are the middle term of only few three-term arithmetic progressions and obtained a subset without any three-term arithmetic progressions, yielding a contradiction. However, Alon's work \cite{alon} is in a very different context and does not use a subspace sampling argument.

Finally, we will deduce Theorem \ref{thm4} from Theorem \ref{thm3}.

\begin{proof}[Proof of Theorem \ref{thm4}] Assume we are given a sequence of vectors in $\Fp^n$ without a zero-sum subsequence of length $p$. Every vector occurs at most $p-1$ times in the sequence. Hence by attaching one additional coordinate we can make all the vectors in the sequence distinct. This way, we obtain a subset of $\Fp^{n+1}$ without $p$ distinct elements summing to zero. Since this subset has size at most $\g(\Fp^{n+1})-1$, we can conclude that the original sequence had length at most $\g(\Fp^{n+1})-1$. This shows $\s(\Fp^n)\leq \g(\Fp^{n+1})$ and together with Theorem \ref{thm3} with $n$ replaced by $n+1$, we obtain $\s(\Fp^n)\leq \g(\Fp^{n+1})\leq 2p\cdot \rt(\Fp^n)$ as desired.
\end{proof}

\section{Proof of Theorem \ref{thm1}}\label{sect3}

In this section we will first bound $\s(G)$ for any finite abelian group $G$ by terms of the form $\s(\Fp^{n})$. Then, applying Theorem \ref{thm4}, we will obtain Theorem \ref{thm1}.

The following lemma was proved by Chi, Ding, Gao, Geroldinger and Schmid \cite[Proposition 3.1]{chietal} and is a generalization of \cite[Hilfssatz 2]{harb}. For the reader's convenience we repeat the proof here.

\begin{lem}[Proposition 3.1 in \cite{chietal}]\label{lemmaprod}Let $G$ be a non-trivial finite abelian group and $H\subseteq G$ be a subgroup such that $\exp(G)=\exp(H)\exp(G/H)$. Then
$$\s(G)\leq \exp(G/H)(\s(H)-1)+\s(G/H).$$
\end{lem}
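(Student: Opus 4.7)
The plan is the standard two-step decomposition for Davenport-type bounds. Write $k=\exp(H)$ and $\ell=\exp(G/H)$, so the hypothesis gives $\exp(G)=k\ell$. Given a sequence of $N:=\ell(\s(H)-1)+\s(G/H)$ elements of $G$, I want to produce a subsequence of length $k\ell$ summing to $0\in G$.

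First, I would project the sequence to the quotient $G/H$ and repeatedly apply the definition of $\s(G/H)$. Since any $\s(G/H)$ elements of $G/H$ contain an $\ell$-term zero-sum subsequence, greedy extraction on the images lets me pull out disjoint blocks $B_1,B_2,\dots$ of length $\ell$ from the original sequence such that each block sum $\sigma_j=\sum_{g\in B_j}g$ lies in $H$ (because its image in $G/H$ is $0$). A careful count shows that after removing $i$ blocks, $N-i\ell$ terms remain, so the process can be continued whenever $N-i\ell\geq \s(G/H)$; with the chosen value of $N$ this lets me extract exactly $\s(H)$ blocks $B_1,\dots,B_{\s(H)}$.

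Now I apply the definition of $\s(H)$ to the sequence $\sigma_1,\dots,\sigma_{\s(H)}$ of elements of $H$. This produces an index set $J\subseteq\{1,\dots,\s(H)\}$ of size $k=\exp(H)$ such that $\sum_{j\in J}\sigma_j=0$ in $H$. Taking the union $\bigcup_{j\in J}B_j$ of the corresponding blocks in the original sequence then yields a subsequence of length $k\ell=\exp(G)$ whose total sum is $\sum_{j\in J}\sigma_j=0$ in $G$, as required.

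The only delicate point is the arithmetic in the greedy step: one must check that $\ell(\s(H)-1)+\s(G/H)$ really is the right threshold so that the final extraction still succeeds, giving exactly $\s(H)$ blocks and not $\s(H)-1$. Apart from that bookkeeping, everything follows directly from the definitions of $\s(H)$ and $\s(G/H)$ together with the multiplicativity hypothesis $\exp(G)=\exp(H)\exp(G/H)$, which is what guarantees that the combined length $k\ell$ matches $\exp(G)$ on the nose.
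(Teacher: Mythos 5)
Your proof is correct and is essentially identical to the paper's argument: greedily extract $\s(H)$ disjoint $\exp(G/H)$-term blocks whose images in $G/H$ sum to zero (the count $N-i\ell\geq\s(G/H)$ for $i\leq\s(H)-1$ checks out), then apply $\s(H)$ to the block sums viewed as elements of $H$ and take the union of the selected blocks. The arithmetic you flag as the ``delicate point'' works exactly as you describe, and the hypothesis $\exp(G)=\exp(H)\exp(G/H)$ is used just as in the paper to match the final subsequence length.
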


\begin{proof}Consider a sequence of length $\exp(G/H)(\s(H)-1)+\s(G/H)$ with elements in $G$. Then we can find a subsequence of length $\exp(G/H)$ summing to zero in $G/H$, i.e. summing to an element of $H$. Delete this subsequence and repeat. We can do this $\s(H)$ many times (since after $\s(H)-1$ many times we still have $\s(G/H)$ elements left). So we find $\s(H)$ disjoint subsequences each of length $\exp(G/H)$ and the sum of each of the subsequences is in $H$. Now writing down these $\s(H)$ sums, we get a sequence of length $\s(H)$ with elements in $H$. So we can choose $\exp(H)$ of them summing to zero. Now taking the union of the corresponding subsequences of the original sequence we obtain $\exp(H)\exp(G/H)=\exp(G)$ elements summing to zero.\end{proof}

\begin{lem}\label{lemma-pgroup}For any finite abelian $p$-group $G=(\Z/p^{a_1}\Z)\times \dots\times (\Z/p^{a_n}\Z)$, where $a_1\geq \dots\geq a_n$ are positive integers and $p\geq 2$ is  prime, we have
$$\s(G)=\s((\Z/p^{a_1}\Z)\times \dots\times (\Z/p^{a_n}\Z))\leq \frac{p^{a_1}-1}{p-1}\s(\Fp^{n})< \frac{\exp(G)}{p-1}\s(\Fp^{n}).$$
\end{lem}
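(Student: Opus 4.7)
The plan is to induct on $a_1$. The base case $a_1=1$ forces all $a_i=1$, so $G=\Fp^n$ and the first inequality reduces to the tautology $\s(\Fp^n)\leq \s(\Fp^n)$.

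For the induction step ($a_1\geq 2$), I would apply Lemma \ref{lemmaprod} to the subgroup $H=pG$. Under the canonical identification, $H\cong (\Z/p^{a_1-1}\Z)\times\dots\times(\Z/p^{a_n-1}\Z)$ (factors with $a_i=1$ just become trivial), while $G/H\cong \Fp^n$. Since $\exp(H)=p^{a_1-1}$ and $\exp(G/H)=p$, the hypothesis $\exp(G)=\exp(H)\exp(G/H)$ holds, and Lemma \ref{lemmaprod} gives $\s(G)\leq p(\s(H)-1)+\s(\Fp^n)$.

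Next I would apply the induction hypothesis to $H$. Writing $n'\leq n$ for the number of nontrivial cyclic factors of $H$, the hypothesis yields $\s(H)\leq \frac{p^{a_1-1}-1}{p-1}\,\s(\Fp^{n'})$. Any sequence in $\Fp^{n'}$ without a length-$p$ zero-sum subsequence remains such a sequence under the embedding $\Fp^{n'}\hookrightarrow\Fp^n$, so $\s(\Fp^{n'})\leq \s(\Fp^n)$, and therefore $\s(H)\leq \frac{p^{a_1-1}-1}{p-1}\,\s(\Fp^n)$. Plugging this into the previous bound and using the telescoping identity $\frac{p(p^{a_1-1}-1)+(p-1)}{p-1}=\frac{p^{a_1}-1}{p-1}$, I obtain
$$\s(G)\leq \frac{p^{a_1}-1}{p-1}\,\s(\Fp^n)-p\leq \frac{p^{a_1}-1}{p-1}\,\s(\Fp^n),$$
which is the first claimed inequality. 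The second inequality is immediate from $p^{a_1}-1<p^{a_1}=\exp(G)$. There is no real obstacle: the argument is a direct iteration of Lemma \ref{lemmaprod} via the induction hypothesis, and the only mild point of care is the degeneration $a_i=1$, where the corresponding cyclic factor of $H$ vanishes and one must apply the induction hypothesis with a smaller $n'$; this is absorbed cleanly by the monotonicity $\s(\Fp^{n'})\leq \s(\Fp^n)$.
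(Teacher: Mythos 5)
Your proof is correct and follows essentially the same route as the paper: induction on $a_1$, applying Lemma \ref{lemmaprod} with $H=pG$ so that $G/H\cong\Fp^n$, invoking the induction hypothesis on $H$ (with fewer nontrivial cyclic factors $n'\leq n$), and using the monotonicity $\s(\Fp^{n'})\leq\s(\Fp^n)$ before collapsing the geometric sum. The only cosmetic difference is that you retain the $-p$ a step longer before discarding it, whereas the paper drops the $-1$ in $p(\s(H)-1)$ immediately; both yield the stated bound.
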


\begin{proof}Since $\exp(G)=p^{a_1}$, the second inequality is clearly true. Now, let us prove the first inequality by induction on $a_1$. If $a_1=1$, then $a_1= \dots= a_n=1$ and so 
$$\s((\Z/p^{a_1}\Z)\times \dots\times (\Z/p^{a_n}\Z))=\s(\Fp^{n})= \frac{p^{a_1}-1}{p-1}\s(\Fp^{n}).$$
For $a_1>1$ we can apply Lemma \ref{lemmaprod} to $H=pG$. Indeed, $G/H\cong \Fp^{n}$ and $H\cong (\Z/p^{a_1-1}\Z)\times \dots\times (\Z/p^{a_n-1}\Z)$. In particular, $\exp(G)=p^{a_1}=p^{a_1-1}\cdot p=\exp(H)\exp(G/H)$. So by Lemma \ref{lemmaprod} we have
$$\s(G)\leq \exp(\Fp^{n})(\s((\Z/p^{a_1-1}\Z)\times \dots\times (\Z/p^{a_n-1}\Z))-1)+\s(\Fp^{n})<p\s((\Z/p^{a_1-1}\Z)\times \dots\times (\Z/p^{a_n-1}\Z))+\s(\Fp^{n}).$$
Let $n'\leq n$ be such that $a_1\geq \dots\geq a_{n'}\geq 2$ and $a_{n'+1}=\dots=a_n=1$. Then by the induction assumption we have
$$\s((\Z/p^{a_1-1}\Z)\times \dots\times (\Z/p^{a_n-1}\Z))=\s((\Z/p^{a_1-1}\Z)\times \dots\times (\Z/p^{a_{n'}-1}\Z))\leq \frac{p^{a_1-1}-1}{p-1}\s(\Fp^{n'})\leq \frac{p^{a_1-1}-1}{p-1}\s(\Fp^{n}).$$
Thus,
$$\s(G)=\s((\Z/p^{a_1}\Z)\times \dots\times (\Z/p^{a_n}\Z))\leq p\cdot \frac{p^{a_1-1}-1}{p-1}\s(\Fp^{n})+\s(\Fp^{n})=\frac{p^{a_1}-1}{p-1}\s(\Fp^{n}),$$
completing the induction.\end{proof}

\begin{remark}\label{complicatedremark} The proof of Lemma \ref{lemma-pgroup} also gives the stronger but more complicated bound
$$\s((\Z/p^{a_1}\Z)\times \dots\times (\Z/p^{a_n}\Z))\leq \sum_{j=1}^{a_1}p^{j-1}\s(\Fp^{b_j}),$$
where $b_j=\max\,\lbrace i\mid a_i\geq j\rbrace$ for $j=1,\dots, a_1$. Note that $b_1\geq \dots \geq b_{a_1}$ is the conjugate of $a_1\geq \dots \geq a_n$ in the sense of Young diagrams.
\end{remark}

\begin{lem}\label{lemma-product-pgroups}Let $G$ be a non-trivial finite abelian group. Let $p_1,\dots,p_m$ be the distinct prime factors of $\exp(G)$. Let us write $G\cong G_1\times\dots\times G_m$ where each $G_i$ is a $p_i$-group. Then
$$\s(G)\leq \sum_{i=1}^{m}\exp(G_1)\dotsm \exp(G_{i-1})\s(G_i)\leq \exp(G)\left(\frac{\s(G_1)}{\exp(G_1)}+\dots+\frac{\s(G_m)}{\exp(G_m)}\right).$$
\end{lem}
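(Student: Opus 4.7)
The plan is to prove the first inequality by induction on $m$, at each step peeling off the factor $G_1$ via Lemma~\ref{lemmaprod}; the second inequality will then follow from a trivial coefficient comparison.

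The base case $m=1$ is immediate: with the empty-product convention, the right-hand side equals $\s(G_1) = \s(G)$. For the inductive step with $m \geq 2$, I let $H \leq G$ be the subgroup corresponding under $G \cong G_1 \times \cdots \times G_m$ to $\{0\} \times G_2 \times \cdots \times G_m$, so that $H \cong G_2 \times \cdots \times G_m$ and $G/H \cong G_1$. The key observation is that the exponents $\exp(G_1),\ldots,\exp(G_m)$ are powers of distinct primes and hence pairwise coprime, which gives
$$\exp(G) = \exp(G_1) \dotsm \exp(G_m) = \exp(G/H) \cdot \exp(H);$$
this is exactly the hypothesis of Lemma~\ref{lemmaprod}, so the lemma yields
$$\s(G) \leq \exp(G_1)\bigl(\s(G_2 \times \cdots \times G_m) - 1\bigr) + \s(G_1) \leq \s(G_1) + \exp(G_1) \cdot \s(G_2 \times \cdots \times G_m).$$
Applying the induction hypothesis to $G_2 \times \cdots \times G_m$ (which has $m-1$ distinct prime factors in its exponent) and reindexing gives $\exp(G_1) \cdot \s(G_2 \times \cdots \times G_m) \leq \sum_{i=2}^{m} \exp(G_1) \dotsm \exp(G_{i-1}) \s(G_i)$, so the first claimed bound follows.

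For the second inequality, I again use $\exp(G) = \exp(G_1) \dotsm \exp(G_m)$ to rewrite the $i$-th summand on the right as
$$\exp(G) \cdot \frac{\s(G_i)}{\exp(G_i)} = \exp(G_1)\dotsm\exp(G_{i-1}) \cdot \exp(G_{i+1})\dotsm\exp(G_m) \cdot \s(G_i),$$
which dominates $\exp(G_1)\dotsm\exp(G_{i-1}) \cdot \s(G_i)$ because $\exp(G_{i+1})\dotsm\exp(G_m) \geq 1$. Summing over $i$ gives the claim, and this is precisely where the bound becomes lossy, as flagged after Theorem~\ref{thm1}. No step presents a real obstacle: the only point requiring genuine care is checking the hypothesis $\exp(G) = \exp(H)\exp(G/H)$ of Lemma~\ref{lemmaprod}, and this is where the coprimality of the Sylow exponents is essential.
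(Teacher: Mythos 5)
Your proof is correct and follows essentially the same route as the paper: induction on $m$ via Lemma~\ref{lemmaprod}, then a trivial coefficient comparison for the second inequality. The only (cosmetic) difference is that you peel off $G_1$ at each step (taking $H\cong G_2\times\dots\times G_m$, $G/H\cong G_1$), whereas the paper peels off $G_m$ (taking $H=G_m$, $G/H\cong G_1\times\dots\times G_{m-1}$); both choices satisfy the coprimality hypothesis of Lemma~\ref{lemmaprod} and produce the identical bound.
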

\begin{proof}First, note that $\exp(G)=\exp(G_1)\dotsm \exp(G_m)$. In particular
$$\exp(G_1)\dotsm \exp(G_{i-1})\leq \frac{\exp(G)}{\exp(G_i)}$$
for every $i$, which makes the second inequality true. We prove the first inequality by induction on $m$. If $m=1$, the statement is trivial. If $m>1$, note that we can apply Lemma \ref{lemmaprod} to $H=G_m$ and obtain
$$\s(G)\leq \exp(G_1\times\dots\times G_{m-1})(\s(G_m)-1)+\s(G_1\times\dots\times G_{m-1}).$$
Plugging in $\exp(G_1\times\dots\times G_{m-1})=\exp(G_1)\dotsm\exp(G_{m-1})$ as well as using the induction assumption for $G_1\times\dots\times G_{m-1}$ yields
$$\s(G)\leq \exp(G_1)\dotsm\exp(G_{m-1})\s(G_m)+\sum_{i=1}^{m-1}\exp(G_1)\dotsm \exp(G_{i-1})\s(G_i)=\sum_{i=1}^{m}\exp(G_1)\dotsm \exp(G_{i-1})\s(G_i)$$
as desired.\end{proof}

\begin{lem}\label{lemmaGgeneral} Under the assumptions of Theorem \ref{thm1} we have
$$\s(G)< \exp(G)\left(\frac{\s(\F_{p_1}^{n_1})}{p_1-1}+\dots+\frac{\s(\F_{p_m}^{n_m})}{p_m-1}\right).$$
\end{lem}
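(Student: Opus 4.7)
The plan is to chain the previous two lemmas together. By the structure theorem for finite abelian groups, write $G \cong G_1 \times \dots \times G_m$, where each $G_i$ is the Sylow $p_i$-subgroup of $G$. Under the hypotheses of Theorem~\ref{thm1}, each $G_i$ decomposes as a product of $n_i$ cyclic groups of $p_i$-power order, so Lemma~\ref{lemma-pgroup} applies to $G_i$ and yields
$$\s(G_i) < \frac{\exp(G_i)}{p_i-1}\, \s(\F_{p_i}^{n_i}),$$
equivalently $\s(G_i)/\exp(G_i) < \s(\F_{p_i}^{n_i})/(p_i-1)$.

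Next, I would apply Lemma~\ref{lemma-product-pgroups} to this decomposition $G \cong G_1 \times \dots \times G_m$ to obtain
$$\s(G) \leq \exp(G)\left(\frac{\s(G_1)}{\exp(G_1)} + \dots + \frac{\s(G_m)}{\exp(G_m)}\right).$$
Substituting the bound from Lemma~\ref{lemma-pgroup} for each summand immediately gives the claimed strict inequality
$$\s(G) < \exp(G)\left(\frac{\s(\F_{p_1}^{n_1})}{p_1-1} + \dots + \frac{\s(\F_{p_m}^{n_m})}{p_m-1}\right).$$

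There is essentially no obstacle; the lemma is a direct assembly of Lemmas~\ref{lemma-pgroup} and~\ref{lemma-product-pgroups}. The only small point worth checking is the strictness of the inequality, which is already built into Lemma~\ref{lemma-pgroup}'s second bound (this strict inequality is inherited by the sum, since $\s(G)$ is bounded by a weakly-increasing combination of these strictly smaller quantities and at least one term appears in the sum). After this lemma is established, Theorem~\ref{thm1} follows by plugging in the bound $\s(\F_{p_i}^{n_i}) \leq 2p_i \cdot \rt(\F_{p_i}^{n_i})$ from Theorem~\ref{thm4} (handling the prime $p_i = 2$ case separately via $\s(\F_2^{n_i}) = 2^{n_i}+1 \leq 2\rt(\F_2^{n_i})+1$, if needed), and using $2p_i/(p_i-1) \leq 3$ for all primes $p_i \geq 3$.
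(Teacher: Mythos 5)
Your proof is correct and matches the paper's argument exactly: decompose $G$ into Sylow $p_i$-subgroups $G_i$, bound each $\s(G_i)$ via Lemma~\ref{lemma-pgroup}, and combine with Lemma~\ref{lemma-product-pgroups}. The strictness is indeed inherited from Lemma~\ref{lemma-pgroup}, as you note.
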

\begin{proof} As in Lemma \ref{lemma-product-pgroups}, let us write $G\cong G_1\times\dots\times G_m$ where each $G_i$ is a $p_i$-group. Each $G_i$ can be written as a product of cyclic groups whose orders are powers of $p_i$. Note that the number of factors of each $G_i$ is precisely $n_i$, because together all these factorizations form the unique representation of $G$ as a product of cyclic groups of prime power order. So, by Lemma \ref{lemma-pgroup}, we have
$$\s(G_i)< \frac{\exp(G_i)}{p_i-1}\s(\F_{p_i}^{n_i})$$
for $i=1,\dots,m$. Now the desired inequality follows directly from Lemma \ref{lemma-product-pgroups}.
\end{proof}

\begin{proof}[Proof of Theorem \ref{thm1}]Note that by Theorem \ref{thm4} we have
$$\frac{\s(\F_{p_i}^{n_i})}{p_i-1}\leq \frac{2p_i}{p_i-1}\rt(\F_{p_i}^{n_i})\leq 3\rt(\F_{p_i}^{n_i})$$
for all the odd $p_i$. Since 
$\s(\F_{2}^{n})=2^n+1$ (see \cite[Korollar 1]{harb}) and $\rt(\F_{2}^{n})=2^n$, we also have $\frac{\s(\F_{p_i}^{n_i})}{p_i-1}\leq 3\rt(\F_{p_i}^{n_i})$ if $p_i=2$. Thus, Lemma \ref{lemmaGgeneral} gives
$$\s(G)<\exp(G)\cdot \left(3\rt(\F_{p_1}^{n_1})+\dots+3\rt(\F_{p_m}^{n_m})\right)= 3\exp(G)\cdot (\rt(\F_{p_1}^{n_1})+\dots+\rt(\F_{p_m}^{n_m})),$$
as desired.
\end{proof}

\textit{Acknowledgements.} We would like to thank the anonymous referees for helpful comments, including pointing out the reference \cite{alon}.

\end{document}